\theoremstyle{plain}
\newtheorem{thm}{Theorem}[section]
\newtheorem{lem}[thm]{Lemma}
\theoremstyle{definition}
\newtheorem{defi}[thm]{Definition}
\theoremstyle{remark}
\newtheorem{rem}[thm]{Remark}
\numberwithin{equation}{section}
\newcommand{\average}{{\mathchoice {\kern1ex\vcenter{\hrule height.4pt
width 6pt depth0pt} \kern-9.7pt} {\kern1ex\vcenter{\hrule
height.4pt width 4.3pt depth0pt} \kern-7pt} {} {} }}
\def\R{\mathbb{R}}
\begin{document}

\title[Free boundary regularity in the $p$-Laplacian obstacle problem]{On the regularity of the free boundary\\ in the $p$-Laplacian obstacle problem}

\author{Alessio Figalli}
\address{ETH Z\"urich, Department of Mathematics, Raemistrasse 101, 8092 Z\"urich, Switzerland}
\email{alessio.figalli@math.ethz.ch}

\author{Brian Krummel}
\address{University of California, Berkeley, Department of Mathematics, 970 Evans Hall, Berkeley, CA 94720, USA}
\email{bkrummel@math.berkeley.edu}

\author{Xavier Ros-Oton}
\address{The University of Texas at Austin, Department of Mathematics, 2515 Speedway, Austin, TX 78751, USA}
\email{ros.oton@math.utexas.edu}

\keywords{Obstacle problem; $p$-Laplacian; free boundary.}
\subjclass[2010]{35R35}

\thanks{AF and BK were supported by NSF-FRG grant  DMS-1361122. XR was supported by NSF grant DMS-1565186 and MINECO grant MTM2014-52402-C3-1-P (Spain)}

\maketitle

\begin{abstract}
We study the regularity of the free boundary in the obstacle for the $p$-Laplacian, $\min\bigl\{-\Delta_p u,\,u-\varphi\bigr\}=0$ in $\Omega\subset\R^n$.
Here, $\Delta_p u=\textrm{div}\bigl(|\nabla u|^{p-2}\nabla u\bigr)$, and $p\in(1,2)\cup(2,\infty)$.

Near those free boundary points where $\nabla \varphi\neq0$, the operator $\Delta_p$ is uniformly elliptic and smooth, and hence the free boundary is well understood.
However, when $\nabla \varphi=0$ then $\Delta_p$ is singular or degenerate, and nothing was known about the regularity of the free boundary at those points.

Here we study the regularity of the free boundary where $\nabla \varphi=0$.
On the one hand, for every $p\neq2$ we construct explicit global $2$-homogeneous solutions to the $p$-Laplacian obstacle problem whose free boundaries have a corner at the origin.
In particular, we show that the free boundary is in general not $C^1$ at points where $\nabla \varphi=0$.
On the other hand, under the ``concavity'' assumption $|\nabla \varphi|^{2-p}\Delta_p \varphi<0$, we show the free boundary is countably $(n-1)$-rectifiable and we prove a nondegeneracy property for $u$ at all free boundary points.
\end{abstract}

\vspace{4mm}

\section{Introduction}

In this paper we study the obstacle problem
\begin{equation}\label{obst-pb}
\min\bigl\{-\Delta_p u,\,u-\varphi\bigr\}=0\qquad\textrm{in}\quad \Omega\subset\R^n
\end{equation}
for the $p$-Laplacian operator
\[\Delta_p u=\textrm{div}\bigl(|\nabla u|^{p-2}\nabla u\bigr),\qquad 1<p<\infty.\]
The problem appears for example when considering minimizers of the constrained $p$-Dirichlet energy
\[\inf \left\{ \int_\Omega |\nabla v|^p \,:\, v\in W^{1,p}(\Omega), \hspace{3mm} v\geq\varphi \ \textrm{in}\ \Omega, \hspace{3mm} v=g\ \textrm{on}\ \partial\Omega \right\},\]
where $\varphi$ and $g$ are given smooth functions and $\Omega$ is a bounded smooth domain.

The regularity of solutions to \eqref{obst-pb} was recently studied by Andersson, Lindgren, and Shahgholian in~\cite{ALS15}.
Their main result establishes that if $\varphi\in C^{1,1}$ then
\[\sup_{B_r(x_0)}(u-\varphi)\leq Cr^2\qquad\textrm{for all}\quad r\in(0,1)\]
at any free boundary point $x_0\in \partial\{u>\varphi\}$.
Thus, solutions $u$ leave the obstacle $\varphi$ in a $C^{1,1}$ fashion at free boundary points $x_0$.

Notice that, near any free boundary point $x_0\in \partial\{u>\varphi\}$ at which $\nabla\varphi(x_0)\neq0$, the solution $u$ will satisfy $\nabla u\neq0$ as well and hence the operator $\Delta_p u$ is uniformly elliptic in a neighborhood of $x_0$.
Therefore, by classical results~\cite{Caf77,Caf98,PSU12}, the solution $u$ is $C^{1,1}$ near $x_0$, and the structure and regularity of the free boundary is well understood.

Thus, the main challenge in problem \eqref{obst-pb} is to understand the regularity of solutions and free boundaries near those free boundary points $x_0\in\partial\{u>\varphi\}$ at which $\nabla \varphi(x_0)=0$.
Our first main result is the following.

\begin{thm}\label{counterexample}
Let $p\in(1,2)\cup(2,\infty)$, and let $\varphi(x)=-|x|^2$ in $\R^2$.
There exists a $2$-homogeneous function $u:\R^2\to \R$ satisfying \eqref{obst-pb} in all of $\R^2$, and such that the set $\{u>\varphi\}$ is a cone with angle
\begin{equation*}
	\theta_0 = 2\pi \left( 1 - \sqrt{\frac{p-1}{2p}} \right) \neq \pi .
\end{equation*}
In particular, the free boundary has a corner at the origin.
\end{thm}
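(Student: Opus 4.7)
The plan is to seek a solution of the form $u(r,\theta) = r^2 g(\theta)$ in $\R^2$, with $g$ an even $2\pi$-periodic function equal to $-1$ outside an arc $\{|\theta| < \theta_0/2\}$; the angle $\theta_0$ will then be forced by a solvability condition. A direct polar-coordinates computation gives
\[
\Delta_p\bigl(r^2 g(\theta)\bigr) \;=\; r^{p-2}\bigl[(F g')' + 2p F g\bigr], \qquad F := (4g^2 + g'^2)^{(p-2)/2},
\]
so $p$-harmonicity of $u$ on the non-contact arc becomes $(Fg')' + 2pFg = 0$, and the $C^{1,1}$ matching of $u$ to $\varphi = -r^2$ across the free-boundary rays imposes $g(\pm\theta_0/2) = -1$ together with $g'(\pm\theta_0/2) = 0$; by even symmetry also $g'(0) = 0$.

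The key observation is that this ODE is invariant under $g\mapsto cg$, so it reduces to first order under the substitution $v := g'/g$. Starting from $v' = g''/g - v^2$ and inserting the expression for $g''/g$ extracted from the ODE, one obtains the separable equation
\[
\frac{dv}{d\theta} \;=\; -\frac{(p-1)(v^2+4)\bigl(v^2 + 2p/(p-1)\bigr)}{(p-1)v^2+4}.
\]
Partial fractions yield the clean separation $\bigl[2/(v^2+4) - 1/(v^2 + 2p/(p-1))\bigr]\,dv = -d\theta$, which integrates (using $v(0)=0$) to
\[
\theta \;=\; -\arctan(v/2) + q\,\arctan(qv), \qquad q := \sqrt{(p-1)/(2p)}.
\]
Since $dv/d\theta < 0$, $v$ decreases monotonically from $0$ and blows up to $-\infty$ at the angle $\theta_c = (\pi/2)(1-q)$, which is exactly the angle at which $g$ first vanishes. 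Just past $\theta_c$, the ratio $g'/g$ jumps from $-\infty$ to $+\infty$ (since $g$ changes sign while $g'$ stays strictly negative), and continuity of the actual solution forces one to add $\pi(1-q)$ to the right-hand side of the integrated formula on this second branch. Demanding $v(\theta_0/2)=0$ on that branch then produces
\[
\theta_0 \;=\; 2\pi\bigl(1 - \sqrt{(p-1)/(2p)}\bigr).
\]

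Finally one verifies that the resulting $u$ solves \eqref{obst-pb} globally. Integrating $(\log|g|)' = v$ together with the implicit $\theta$-$v$ correspondence yields the explicit formula $|g(\theta)|/|g(0)| = 4q\sqrt{v^2 + 1/q^2}/(v^2+4)$; the normalization $g(\theta_0/2)=-1$ pins $g(0)=1$ and shows that $g$ is strictly monotone decreasing from $1$ to $-1$ on $[0,\theta_0/2]$, so $g > -1$ on the open arc and $u > \varphi$ there. On the complementary contact wedge $u = \varphi$, and an elementary computation gives $-\Delta_p(-r^2) = p\,2^{p-1}\,r^{p-2} > 0$, which settles the obstacle inequality. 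The main obstacle in the argument is the $\arctan$-branch bookkeeping across the zero of $g$: without the additional $\pi(1-q)$ shift on the second branch the predicted angle would be off by exactly that amount, and the stated formula for $\theta_0$ would not emerge.
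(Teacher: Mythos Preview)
Your argument is correct and reaches the same conclusion as the paper, but the route is genuinely different. Both proofs reduce $\Delta_p(r^2 g(\theta))=0$ to the autonomous second-order ODE
\[
g'' \;=\; -\,g\,\frac{8p\,g^2 + (6p-8)(g')^2}{4g^2 + (p-1)(g')^2},
\]
but they exploit its structure differently. The paper passes to phase-plane variables $(X,Y)=(g,g')$ and then to polar-type coordinates $X=\rho\cos\psi$, $Y=\rho\sin\psi$, obtaining a decoupled equation for $\psi$; the angle $\theta_0$ then emerges from the integral $\int_0^{\pi/2}\!\frac{d\sigma}{1+\cos^2\sigma\,F_p(\sigma)}$, which is evaluated via the substitution $t=\tan\sigma$. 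You instead use the scale invariance directly through the Riccati substitution $v=g'/g$, which is precisely $\tan\psi$ in the paper's notation, so your separable ODE and partial-fraction decomposition are exactly what the paper obtains \emph{after} its final $\tan$ substitution. In this sense your path is shorter: it skips the intermediate $(\rho,\psi)$ system and lands immediately on the computable integral.

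The trade-off is that your parameterization is singular at the zero of $g$, forcing the branch bookkeeping you describe (the additional $\pi(1-q)$ shift). The paper's $(\rho,\psi)$ coordinates are globally nonsingular (since $(g,g')\neq(0,0)$ along the trajectory), so no such matching is needed there; this is also why the paper can more easily detect the extra solutions with winding number $k=2,3$ that you do not see. For the theorem as stated, however, your $k=1$ construction suffices, and your verification that $g$ decreases monotonically from $1$ to $-1$ (via the closed formula for $|g|$ in terms of $v$) cleanly establishes $u>\varphi$ on the open arc. One small point: you should note, as the paper does, that the $C^1$ matching across the free-boundary rays together with an integration by parts shows $\Delta_p u = \Delta_p(-r^2)\,\chi_{\{\text{contact}\}}\le 0$ \emph{weakly} on $\R^2\setminus\{0\}$; the pointwise computation on the contact wedge alone does not quite close this.
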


\begin{rem}
Let $u$ be a solution to \eqref{obst-pb} with $\varphi(x) = -|x|^2$ as in Theorem \ref{counterexample}.  For each $a \in \R$ and $b > 0$, $a - b u$ is a solution to \eqref{obst-pb} in $\R^2$ with $\varphi(x) = a - b |x|^2$ for which the contact set is a cone with angle $\theta_0\neq\pi$.
\end{rem}

\begin{rem}
Notice that for $p\in(2,\infty)$, $\theta_0 > \pi$ and thus $u$ is not convex. This is in contrast with the classical result of Caffarelli on the classifications of global solutions to the obstable problem for the Laplacian \cite{Caf98}.
\end{rem}

\begin{rem}
In the process of constructing the solutions $u$ of Theorem \ref{counterexample}, for $p = 9$ we will construct a global solution $u$ to $\Delta_p u = 0$ in all of $\R^2$.
\end{rem}

In view of the above result, no $C^1$ regularity can be expected for the free boundary at points at which $\nabla\varphi=0$. Also, the lack of convexity of possible blow-up profiles
seems to be a major obstacle of understanding the fine structure of the free boundary at these points.

Still, an interesting question is to decide whether the free boundary has finite $\mathcal H^{n-1}$ measure near points at which the gradient of the obstacle vanishes.
A standard first step in this direction is to prove a nondegeneracy result stating that $u-\varphi$ cannot decay faster than quadratic at free boundary points.
\cite{ALS15} previously proved a similar nondegeneracy result at the free boundary points under the assumptions that $\varphi\in C^2$, $p>2$, and $\Delta_p\varphi\leq -c_0<0$.
However, if $\varphi\in C^2$ satisfies $\nabla \varphi(x_0)=0$ then $\Delta_p\varphi(x_0)=0$, and thus the result in~\cite{ALS15} can not be applied to free boundary points on $\{\nabla\varphi=0\}$.
We show the following. 

\begin{thm}\label{nondegeneracy}
Let $p\in(1,\infty)$, $\varphi\in C^2(B_1)$, and $u$ be a solution of \eqref{obst-pb} in $B_1$.
Assume that $\varphi$ satisfies 
\begin{equation}\label{concavity}
	|\nabla\varphi|^{2-p}{\rm div}\bigl(|\nabla \varphi|^{p-2}\nabla \varphi\bigr)\leq -c_0<0\qquad \textrm{in}\quad \{\nabla \varphi\neq0\}.
\end{equation}
Then for any free boundary point $x_0\in \partial\{u>\varphi\}\cap B_{1/2}$ there exists $c_1 > 0$ such that
\[\sup_{B_r(x_0)}(u-\varphi)\geq c_1 r^2\qquad \textrm{for}\quad r\in(0,1/2),\]
where the constant $c_1$ depends only on the modulus of continuity of $D^2\varphi$ and on the constant $c_0$ in \eqref{concavity}.
\end{thm}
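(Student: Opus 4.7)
\emph{Proof plan.} I adapt the classical Caffarelli nondegeneracy argument for the obstacle problem to the quasilinear setting. Write the hypothesis using the normalized $p$-Laplacian
\[
\Delta_p^N v := \Delta v + (p-2)\,\frac{\nabla v^\top D^2 v\,\nabla v}{|\nabla v|^2} = \operatorname{tr}\bigl(A(\nabla v)\,D^2 v\bigr),\qquad A(\xi):=I+(p-2)\,\frac{\xi\xi^\top}{|\xi|^2},
\]
so that \eqref{concavity} reads $\operatorname{tr}\bigl(A(\nabla\varphi)\,D^2\varphi\bigr)\le -c_0$ on $\{\nabla\varphi\ne 0\}$, and $\operatorname{tr}(A(\xi))=n+p-2$ for every unit $\xi$. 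By a limiting argument at a critical point $x_c$ of $\varphi$ (approaching $x_c$ along eigenvector directions of $D^2\varphi(x_c)$), the hypothesis extends to $\operatorname{tr}\bigl(A(\xi)\,D^2\varphi(x_c)\bigr)\le -c_0$ for every unit $\xi$, and by $C^2$-continuity to $\operatorname{tr}\bigl(A(\xi)\,D^2\varphi(x)\bigr)\le -c_0+\omega(|x-x_c|)$ for $x$ nearby, where $\omega$ is the modulus of continuity of $D^2\varphi$.

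After translating and subtracting constants, assume $x_0=0$ and $\varphi(0)=u(0)=0$; by the $C^{1,1}$ regularity of $u$ from~\cite{ALS15}, $\nabla u(0)=\nabla\varphi(0)$. If $\nabla\varphi(0)\ne 0$ then $\nabla u$ stays uniformly bounded away from $0$ in a fixed neighborhood, making $\Delta_p$ smooth and uniformly elliptic there; the hypothesis yields $-\Delta_p\varphi\ge\bar c_0>0$, and the classical nondegeneracy for obstacle problems governed by uniformly elliptic operators (as in~\cite{PSU12} or~\cite[Prop.~5.1]{ALS15}) applies. Assume henceforth $\nabla\varphi(0)=0$.

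Choose $r_*>0$ with $\omega(r_*)\le c_0/4$; for $r\in(0,r_*]$, pick any $y\in B_{r/2}(0)\cap\{u>\varphi\}$ (which exists since $0\in\partial\{u>\varphi\}$), set $d:=u(y)-\varphi(y)>0$, and introduce the barrier
\[
\Psi(x):=\varphi(x)+d+\beta\,|x-y|^2,\qquad \beta:=\frac{c_0}{4(n+p-2)}.
\]
Then $\Psi(y)=u(y)$, and using $\operatorname{tr}(A(\xi))=n+p-2$ one computes
\[
\Delta_p^N\Psi(x)=\operatorname{tr}\bigl(A(\nabla\Psi)\,D^2\varphi\bigr)+\frac{c_0}{2}\le -c_0+\omega(r)+\frac{c_0}{2}\le -\frac{c_0}{4}<0
\]
throughout $B_{r/2}(y)\subset B_r(0)$, so $\Psi$ is a strict $p$-supersolution there. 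Set $D:=B_{r/2}(y)\cap\{u>\varphi\}$. On the free boundary part $\partial D\cap\partial\{u>\varphi\}$, $u=\varphi$ gives $\Psi-u=d+\beta|x-y|^2>0$. If also $\sup_{\partial B_{r/2}(y)\cap\{u>\varphi\}}(u-\varphi)<d+\beta r^2/4$, then $\Psi>u$ on the remaining part $\partial B_{r/2}(y)\cap\overline{\{u>\varphi\}}$, so $\Psi\ge u$ on all of $\partial D$. Comparison with the $p$-harmonic function $u$ then forces $\Psi\ge u$ in $D$; but $\Psi(y)=u(y)$ at an interior point and $\Psi$ is a strict supersolution, so the strong minimum principle for the $p$-Laplacian yields $\Psi\equiv u$ on the component of $D$ through $y$, a contradiction. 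Therefore
\[
\sup_{B_r(0)}(u-\varphi)\ge\sup_{\partial B_{r/2}(y)}(u-\varphi)\ge d+\frac{\beta r^2}{4}\ge\frac{c_0}{16(n+p-2)}\,r^2.
\]
For $r\in(r_*,1/2)$ the desired estimate follows by applying the above at radius $r_*$ and using $r\le 1/2$.

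\emph{Main obstacle.} The crucial step is verifying that $\Psi$ is a strict $p$-supersolution throughout $B_{r/2}(y)$; this requires the bound $\operatorname{tr}\bigl(A(\xi)\,D^2\varphi(x)\bigr)\le -c_0+\omega(r)$ in an \emph{arbitrary} direction $\xi$ (namely $\xi=\nabla\Psi/|\nabla\Psi|$, which may differ markedly from $\nabla\varphi/|\nabla\varphi|$ when $|\nabla\varphi|$ and $2\beta|x-y|$ are comparable). That this holds is exactly the content of the direction-independent extension of the hypothesis at critical points of $\varphi$, combined with the $C^2$-continuity of $\varphi$; this is also what forces the constant $c_1$ to depend on the modulus of continuity of $D^2\varphi$.
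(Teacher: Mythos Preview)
Your overall strategy---build a barrier $\Psi=\varphi+\beta|x-y|^2$ that is $p$-superharmonic near a critical point of $\varphi$ and then run the Caffarelli comparison argument---is exactly the route taken in the paper. The paper isolates the barrier construction as a separate lemma (its Lemma~3.2), and the final comparison step is essentially identical to yours.

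The genuine gap is the ``direction-independent extension'' you assert in the first paragraph. Approaching $x_c$ along an eigendirection $e_i$ of $D^2\varphi(x_c)$ only yields
\[
\Delta\varphi(x_c)+(p-2)\lambda_i\le -c_0 \qquad\text{for each } i \text{ with } \lambda_i\neq 0,
\]
and from this one \emph{cannot} conclude $\operatorname{tr}\bigl(A(\xi)D^2\varphi(x_c)\bigr)\le -c_0$ for every unit $\xi$. A concrete counterexample for $p>2$ is $\varphi(x)=-\tfrac{c_0}{2(p-1)}x_1^2$ in $\R^2$: here \eqref{concavity} holds with equality on $\{x_1\neq 0\}$, yet at the origin $\operatorname{tr}\bigl(A(e_2)D^2\varphi(0)\bigr)=-c_0/(p-1)>-c_0$. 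With your choice $\beta=c_0/\bigl(4(n+p-2)\bigr)$ this gives
\[
\Delta_p^N\Psi \;\le\; -\frac{c_0}{p-1}+\omega(r)+\frac{c_0}{2},
\]
which is \emph{positive} for $p\ge 3$, so $\Psi$ is not a supersolution and the comparison step collapses. The issue is precisely that when $D^2\varphi(x_c)$ has a zero eigenvalue you cannot probe that direction by the limiting argument, and the quadratic form $\xi^\top D^2\varphi(x_c)\,\xi$ may take the value $0$ while the hypothesis only controls it at the nonzero eigenvalues.

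The paper confronts exactly this difficulty in its Lemma~3.2 by splitting into two cases. When $\lambda_{\max}(x_c)\le 0$ it abandons the direction-independent bound altogether and instead bounds $\lambda_{\min}$ from above and $\lambda_{\max}$ from below directly. When $\lambda_{\max}(x_c)>0$ it does obtain the eigendirection inequality at $\lambda_{\max}$ (resp.\ $\lambda_{\min}$ for $p<2$), but then must carefully choose the perturbation size $\varepsilon$ to avoid the spectrum of $-D^2\varphi(x_c)$ (so that $|\lambda_i+\varepsilon|\ge \varepsilon/(2(n+2))$ for all $i$), in order to control the quotient defining $\Delta_\infty(\varphi+\tfrac12\varepsilon|x|^2)$. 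Your argument needs one of these two refinements; as written, the barrier step fails for $p\ge 3$.
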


\begin{rem}
The hypothesis \eqref{concavity} is nontrivial in the sense that \eqref{concavity} implies that either $\varphi$ is identically constant on $B_1$ or $\{\nabla \varphi\neq0\}$ is an open dense subset of $B_1$, see Lemma \ref{rectifiability_lemma} below.  In the case that $\varphi$ is identically constant on $B_1$, by the Hopf boundary point lemma~\cite[Theorem 5]{Vaz84} either $u \equiv \varphi$ in $B_1$ or $\Delta_p u = 0$ and $u > \varphi$ in $B_1$ and in particular the free boundary is an empty set.
\end{rem}

As a consequence of Theorem \ref{nondegeneracy} we can deduce that, under the hypotheses of Theorem \ref{nondegeneracy}, the free boundary is porous: i.e., there exists a $\delta > 0$ such that for every $B_r(x_0) \subseteq B_1$, there exists $B_{\delta r}(x) \subset B_r(x_0) \setminus \partial\{u>\varphi\}$.
The proof is standard and follows from combining the optimal regularity of solutions in~\cite{ALS15} with Theorem \ref{nondegeneracy} above.
Porosity of the free boundary implies that the free boundary has zero Lebesgue measure.  We in fact prove the stronger result that, under the hypotheses of Theorem \ref{nondegeneracy}, the free boundary $\partial \{u>\varphi\}$ is an $(n-1)$-dimensional rectifiable set. 

\begin{defi}
Let $0 \leq k \leq n$ be an integer.  We say a set $S \subseteq \R^n$ is countably $k$-rectifiable if there exists a set $E_0 \subset \R^n$ with $\mathcal{H}^k(E_0) = 0$ and a countable collection of Lipschitz maps $f_j : \R^k \rightarrow \R^n$ such that
\begin{equation*}
	S \subseteq E_0 \cup \bigcup_{j=1}^{\infty} f_j(\R^k).
\end{equation*}
\end{defi}

\begin{thm}\label{rectifiability}
Let $p\in(1,\infty)$, $\varphi\in C^2(B_1)$, and $u$ be a solution of \eqref{obst-pb} in $B_1$.
Assume that $\varphi$ satisfies \eqref{concavity}.  
Then, the free boundary $\partial \{u>\varphi\}$ is countably $(n-1)$-rectifiable.
\end{thm}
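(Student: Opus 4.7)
The plan is to split the free boundary into the uniformly elliptic piece and the degenerate piece and to handle each by entirely different techniques. Write $\partial\{u>\varphi\}=\Gamma_*\cup\Gamma_0$ with
\[
\Gamma_*:=\partial\{u>\varphi\}\cap\{\nabla\varphi\neq 0\},\qquad \Gamma_0:=\partial\{u>\varphi\}\cap\{\nabla\varphi=0\}.
\]

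For $\Gamma_*$ I would first observe that near any $x_0\in\Gamma_*$ the gradient $\nabla\varphi$ stays bounded away from $0$ on a small ball $U$; combined with the $C^{1,1}$ bound of~\cite{ALS15} and the identity $\nabla u=\nabla\varphi$ on the contact set, this forces $|\nabla u|\geq c>0$ on $U$. Hence on $U$ the $p$-Laplacian reduces to a smooth uniformly elliptic quasilinear operator, and \eqref{obst-pb} becomes a classical obstacle problem with $C^2$ obstacle. The Caffarelli dichotomy together with known results on rectifiability of the singular stratum in the classical obstacle problem then yield that $\Gamma_*\cap U$ is countably $(n-1)$-rectifiable, and covering $\Gamma_*$ by countably many such neighborhoods handles this piece.

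For $\Gamma_0$ the plan is to prove the stronger statement that the entire critical set $K:=\{\nabla\varphi=0\}$ is countably $(n-1)$-rectifiable, which implies the claim because $\Gamma_0\subseteq K$. The key observation is that \eqref{concavity} can be rewritten as
\[
\Delta\varphi + (p-2)\bigl\langle D^2\varphi\,\nu,\nu\bigr\rangle \leq -c_0 \quad\text{on } \{\nabla\varphi\neq 0\}, \qquad \nu:=\nabla\varphi/|\nabla\varphi|.
\]
Fix $x_0\in K$ and choose $x_k\to x_0$ with $\nabla\varphi(x_k)\neq 0$ (possible by Lemma \ref{rectifiability_lemma}); passing to the limit along a subsequence with $\nabla\varphi(x_k)/|\nabla\varphi(x_k)|\to\nu^*\in S^{n-1}$ gives $\Delta\varphi(x_0)+(p-2)\langle D^2\varphi(x_0)\nu^*,\nu^*\rangle\leq -c_0<0$. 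In particular $D^2\varphi(x_0)\neq 0$, so some unit vector $e=e(x_0)$ has $\partial_{e e}\varphi(x_0)\neq 0$. Fixing a countable dense family $\{e_j\}\subset S^{n-1}$, the open sets $U_j:=\{\partial_{e_j e_j}\varphi\neq 0\}$ cover $K$ by continuity, and on each $U_j$ the implicit function theorem realizes $\{\partial_{e_j}\varphi=0\}\supseteq K\cap U_j$ as a $C^1$ hypersurface, giving the desired rectifiability. This step is essentially the content of Lemma \ref{rectifiability_lemma}.

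I expect the main technical obstacle to be the invocation of classical rectifiability on $\Gamma_*$: although the reduction to a smooth uniformly elliptic problem is straightforward, rectifiability of the singular stratum for the $p$-Laplacian obstacle problem requires adapting the techniques developed for the Laplacian obstacle problem to the quasilinear operator $-\textrm{div}\bigl(a(\nabla u)\nabla u\bigr)$. The degenerate piece $\Gamma_0$, by contrast, should be handled purely by pointwise analysis of $\varphi$ via the concavity hypothesis and the implicit function theorem, so all of its content sits in the rectifiability lemma.
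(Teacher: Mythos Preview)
Your proposal is correct and follows the paper's proof essentially verbatim: the same decomposition $\Gamma_*\cup\Gamma_0$, the classical uniformly elliptic theory on $\Gamma_*$, and Lemma~\ref{rectifiability_lemma} (whose content you reprove with a minor variant using a countable dense set of directions) for $\Gamma_0$. Your worry about adapting Caffarelli's rectifiability results to the quasilinear operator is unwarranted---once $|\nabla u|\geq c>0$ on a ball the operator is smooth and uniformly elliptic, and the paper simply invokes~\cite[Corollary~4]{Caf98} without further ado.
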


Related obstacle-type problems for the $p$-Laplacian have been studied in~\cite{KKPS00,LS03,CLRT14,CLR12}.
In those works, however, they studied the different problem
\begin{equation}\label{obst-pb2}
\Delta_p u=f(x)\chi_{\{u>0\}}\qquad \textrm{in}\quad \Omega\subset\R^n.
\end{equation}
It is important to notice that, when $p\neq2$, the obstacle problems \eqref{obst-pb} and \eqref{obst-pb2} are of quite different nature.
For example, when $f\equiv1$ solutions to \eqref{obst-pb2} are \emph{not} $C^{1,1}$ but $C^{1,\frac{1}{p-1}}$ near all free boundary points.

The paper is organized as follows.
In Section \ref{sec:counterexample_sec} we prove Theorem \ref{counterexample}.
Then, in Section \ref{sec:structfreebdry_sec} we prove Theorems \ref{nondegeneracy} and \ref{rectifiability}.

\section{Homogeneous degree-two solutions}
\label{sec:counterexample_sec}

We construct here the homogeneous solutions of Theorem \ref{counterexample}.

\begin{proof}[Proof of Theorem \ref{counterexample}]
Let $1 < p < \infty$ and $p \neq 2$, and let $\varphi(x)=-|x|^2$ in $\R^2$.
We will show that there exists a global solution $u(x_1,x_2)$ to \eqref{obst-pb} which is homogeneous of degree 2 and such that the free boundary consists of two rays meeting at an angle $\theta_0\neq \pi$.

We use polar coordinates $r e^{i\theta}$ on $\R^2$ where $r > 0$ and $\theta \in [0,2\pi]$.
We want to construct $u \in C^1(\R^2)$ such that
\begin{gather}
	\Delta_p u(re^{i\theta}) = 0, \quad u(re^{i\theta}) \geq -r^2 \quad\text{ for } \theta \in (0,\theta_0), \nonumber \\
	u(re^{i\theta}) = -r^2 \quad\text{ for } \theta \in [\theta_0,2\pi], \label{pLapHomog_prob1} \\
	 \frac{\partial u}{\partial\theta}(r) = \frac{\partial u}{\partial\theta} (re^{i\theta_0}) = 0. \nonumber
\end{gather}
Assume that $u(re^{i\theta}) = r^2 v(\theta)$ for some $2\pi$-periodic function $v \in C^1(\R) \cap C^{\infty}([0,\theta_0]) \cap C^{\infty}([\theta_0,2\pi])$.  We want to express $\Delta_p u(re^{i\theta}) = 0$ for $\theta \in (0,\theta_0)$ as a ordinary differential equation of $v$.  We compute
\begin{equation*}
	\nabla u = 2 \, r \, v(\theta) \, \frac{\partial}{\partial r} + v'(\theta) \, \frac{\partial}{\partial \theta}
\end{equation*}
and thus
\begin{align*}
	\Delta_p u &= \frac{1}{r} \, \frac{\partial}{\partial r} \left( r \, |\nabla u|^{p-2} \frac{\partial u}{\partial r} \right)
		+ \frac{1}{r^2} \, \frac{\partial}{\partial \theta} \left( |\nabla u|^{p-2} \frac{\partial u}{\partial \theta} \right) \\
	&= \frac{1}{r} \, \frac{\partial}{\partial r} \left( r^p \, (4 \, v^2 + (v')^2)^{(p-2)/2} \cdot 2 v \right)
		+ r^{p-2} \, \frac{\partial}{\partial \theta} \left( (4 \, v^2 + (v')^2)^{(p-2)/2} \, v' \right) \\
	&= r^{p-2} \, (4 \, v^2 + (v')^2)^{(p-2)/2} \left( 2p \, v + v''
		+ (p-2) \, \frac{4 \, v \, (v')^2 +(v')^2 \, v''}{4 \, v^2 + (v')^2} \right) .
\end{align*}
Thus we can rewrite $\Delta_p u = 0$ as
\begin{equation*}
	2p \, v + v'' + (p-2) \, \frac{4 \, v \, (v')^2 +(v')^2 \, v''}{4 \, v^2 + (v')^2} = 0 \,.
\end{equation*}
Solving for $v''$,
\begin{equation} \label{pLapHomog_ode}
	v'' = -v \frac{8p \, v^2 + (6p-8) \, (v')^2}{4 \, v^2 + (p-1) \, (v')^2} \,.
\end{equation}
Notice that \eqref{pLapHomog_prob1} is equivalent to $v$ satisfying \eqref{pLapHomog_ode} for $\theta \in (0,\theta_0)$ and
\begin{gather}
	v(\theta) \geq -1\quad \text{ for } \theta \in (0,\theta_0), \nonumber \\
	v(\theta) = -1 \quad\text{ for } \theta \in [\theta_0,2\pi], \label{pLapHomog_bc1} \\
	v'(0) = v'(\theta_0) = 0. \nonumber
\end{gather}
Moreover, by integration by parts, \eqref{pLapHomog_ode}, and the homogeneity of $u$, for all $\zeta \in C^1_c(\R^2 \setminus \{0\})$ it holds 
\begin{align*}
	&-\int_{B_1} |\nabla u|^{p-2} \,\nabla u \cdot \nabla \zeta 
	\\&\hspace{7mm} = -\int_0^{\theta_0} \int_0^{\infty} |\nabla u|^{p-2} \,\nabla u \cdot \nabla \zeta \,r\,dr\,d\theta 
		- \int_{\theta_0}^{2\pi} \int_0^{\infty} |\nabla u|^{p-2} \,\nabla u \cdot \nabla \zeta \,r\,dr\,d\theta 
	\\&\hspace{7mm} = \int_0^{\theta_0} \int_0^{\infty} \Delta_p u \, \zeta \,r\,dr\,d\theta 
		+ \int_{\theta_0}^{2\pi} \int_0^{\infty} \Delta_p u \, \zeta \,r\,dr\,d\theta 
		+ \lim_{r \downarrow 0} \int_0^{2\pi} |\nabla u|^{p-2} \, D_r u \, \zeta \,r\,d\theta 
	\\&\hspace{7mm} = \int_{\theta_0}^{2\pi} \int_0^{\infty} \Delta_p (-r^2) \, \zeta \,r\,dr\,d\theta .
\end{align*}
Hence, $\Delta_p u = \Delta_p (-r^2)\,\chi_{\{\theta_0<\theta<2\pi\}} \leq 0$ weakly in $\R^2 \setminus \{0\}$.  This together with \eqref{pLapHomog_prob1} implies that $u$ is a solution to \eqref{obst-pb}.

Now let us solve \eqref{pLapHomog_ode}.  Set
\[X = v(\theta)\qquad\textrm{and}\qquad Y = v'(\theta)\]
so that we transform \eqref{pLapHomog_ode} into the first order system
\begin{equation} \label{pLapHomog_ode2}
	\left\{
	\begin{split}
	X' &= Y,  \\
	Y' &= -X \, \frac{8p \, X^2 + (6p-8) \, Y^2}{4 \, X^2 + (p-1) \, Y^2}\qquad \text{ on } (0,\theta_0) .
	\end{split}
	\right.
\end{equation}
Now, \eqref{pLapHomog_bc1} implies that
\begin{gather}
	X(\theta) \geq -1 \quad \text{ for } \theta \in (0,\theta_0), \nonumber \\
	(X(0),Y(0)) = (X(\theta_0),Y(\theta_0)) = (-1,0) \,. \label{pLapHomog_bc3}
\end{gather}
Notice that \eqref{pLapHomog_ode2} states that $X' = Y$ and $Y'$ equals a homogeneous degree one function of $(X,Y)$.  Thus it is convenient to set
\[X = \rho(\theta) \, \cos(\psi(\theta))\qquad \textrm{and}\qquad Y = \rho(\theta) \, \sin(\psi(\theta))\]
for some functions $\rho$ and $\psi$,
so that \eqref{pLapHomog_ode2} is equivalent to
$$
\left\{
\begin{split}
	\frac{\rho'}{\rho} \, \cos(\psi) - \psi' \, \sin(\psi) &= \sin(\psi),  \\
	\frac{\rho'}{\rho} \, \sin(\psi) + \psi' \, \cos(\psi) &= -\cos(\psi) \, \frac{8p \, \cos^2(\psi) + (6p-8) \, \sin^2(\psi)}{4 \, \cos^2(\psi) + (p-1) \, \sin^2(\psi)} \,
	\end{split}
	\right.
$$
for all $\theta \in (0,\theta_0)$.  Let
\begin{equation*}
	F_p(\psi) := \frac{8p \, \cos^2(\psi) + (6p-8) \, \sin^2(\psi)}{4 \, \cos^2(\psi) + (p-1) \, \sin^2(\psi)} - 1
	= \frac{(8p-4) \, \cos^2(\psi) + (5p-7) \, \sin^2(\psi)}{4 \, \cos^2(\psi) + (p-1) \, \sin^2(\psi)}
\end{equation*}
so that
\begin{equation} \label{pLapHomog_ode3}
	\left\{
	\begin{split}
	\frac{\rho'}{\rho} \, \cos(\psi) - \psi' \, \sin(\psi) &= \sin(\psi),  \\
	\frac{\rho'}{\rho} \, \sin(\psi) + \psi' \, \cos(\psi) &= -\cos(\psi) - \cos(\psi) \, F(\psi)
	\end{split}
	\right.
\end{equation}
for all $\theta \in (0,\theta_0)$.  Note that \eqref{pLapHomog_ode3} can be rewritten as
\begin{align}
	\frac{\rho'}{\rho} &= -\cos(\psi) \, \sin(\psi) \, F_p(\psi), \label{pLapHomog_ode4} \\
	\psi' &= -1 - \cos^2(\psi) \, F_p(\psi) , \label{pLapHomog_ode5}
\end{align}
and this system can be solved by first solving \eqref{pLapHomog_ode5} to find $\psi$, and then integrating \eqref{pLapHomog_ode4} to find $\rho$.  We compute that
\begin{align*}
	\frac{\partial}{\partial p} F_p(\psi)
	&= \frac{\partial}{\partial p} \left( \frac{(8p-4) \, \cos^2(\psi) + (5p-7) \, \sin^2(\psi)}{4 \, \cos^2(\psi) + (p-1) \, \sin^2(\psi)} \right)
	\\&= \frac{\partial}{\partial p} \left( \frac{(3p+3) \, \cos^2(\psi) + 5p-7}{(-p+5) \, \cos^2(\psi) + p-1} \right)
	\\&= \frac{(3 \, \cos^2(\psi) + 5) \, (5 \, \cos^2(\psi) - 1) - (3 \, \cos^2(\psi) - 7) \, (-\cos^2(\psi) + 1)}{((-p+5) \, \cos^2(\psi) + p-1)^2}
	\\&= \frac{18 \, \cos^4(\psi) + 12 \, \cos^2(\psi) + 2}{((-p+5) \, \cos^2(\psi) + p-1)^2} > 0
\end{align*}
for all $\psi \in [0,2\pi]$, so
\begin{equation*}
	1+ \cos^2(\psi) \, F_p(\psi) \geq 1 + \cos^2(\psi) \, F_1(\psi) = 1 + \cos^2(\psi) - \frac{1}{2} \, \sin^2(\psi) = \frac{1}{2} + \frac{3}{2} \, \cos^2(\psi) > 0
\end{equation*}
for all $\psi \in [0,2\pi] \setminus \{\pi/2,3\pi/2\}$. Note that, when $\psi = \pi/2, 3\pi/2$, $F_p(\psi)$ degenerates as $p \downarrow 1$, but $1+ \cos^2(\psi) \, F_p(\psi) = 1$ for all $p > 1$.  Thus, solving \eqref{pLapHomog_ode5}, we find that
$\psi(\theta) = \Theta^{-1}(\theta)$ for all $\theta \in [0,\theta_0]$ where $\Theta : \R \rightarrow \R$ is the strictly decreasing function defined by
\begin{equation} \label{pLapHomog_psi}
	\Theta(\psi) := -\int_{\psi(0)}^{\psi} \frac{d\sigma}{1 + \cos^2(\sigma) \, F_p(\sigma)}
\end{equation}
for $\psi(0)$ to be determined.
Integrating \eqref{pLapHomog_ode4} over $[0,\theta]$, we obtain
\begin{equation} \label{pLapHomog_rho}
	\rho(\theta) = \rho(0) \, \exp \left( - \int_0^{\theta} \cos(\psi(\tau)) \, \sin(\psi(\tau)) \, F_p(\psi(\tau)) \, d\tau \right)
\end{equation}
for all $\theta \in [0,\theta_0]$ and for $\rho(0)$ to be determined.  Notice that $\psi, \rho \in C^{\infty}([0,\theta_0])$ and thus $v = \rho \, \cos(\psi) \in C^{\infty}([0,\theta_0])$. 

It remains to determine $\theta_0$, $\psi(0)$, and $\rho(0)$, and to verify that \eqref{pLapHomog_bc3} holds true. 

To this aim, we observe that \eqref{pLapHomog_bc3} is equivalent to
\begin{equation} \label{pLapHomog_bc5}
	\rho(0) = \rho(\theta_0) = 1, \quad \psi(0) = \pi, \quad \psi(\theta_0) = -(2k-1) \pi
\end{equation}
for some integer $k \geq 1$. Then, in view of the fact that $X' = Y$ (and so $X(\theta)$ attains its minimum value when $\psi(\theta) = -(2j-1) \pi$ for some integer $j$), we see that
\begin{equation} \label{pLapHomog_bc6}
	\rho(\theta) \leq 1 \text{ whenever } \psi(\theta) = -(2j-1) \pi,  \text{ for } j = 1,2,\ldots,k-1.
\end{equation}
Hence, by \eqref{pLapHomog_bc5}, we should choose $\rho(0) = 1$ and $\psi(0) = \pi$.  To choose $\theta_0$ observe that, by \eqref{pLapHomog_psi}, $\psi(\theta_0) = -(2k-1) \, \pi$ if and only if
\begin{equation*}
	\theta_0 = \int_{-(2k-1) \, \pi}^{\pi} \frac{d\sigma}{1 + \cos^2(\sigma) \, F_p(\sigma)} = 4k \int_0^{\pi/2} \frac{d\sigma}{1 + \cos^2(\sigma) \, F_p(\sigma)},
\end{equation*}
where the last step follows by symmetry.  We compute that
\begin{align*} 
	&\int_0^{\pi/2} \frac{d\sigma}{1 + \cos^2(\sigma) \, F_p(\sigma)} \nonumber
	\\&\hspace{10mm} = \int_0^{\pi/2} \frac{4 \, \cos^2(\sigma) + (p-1) \, \sin^2(\sigma)}{4 \, \cos^2(\sigma) + (p-1) \, \sin^2(\sigma) + (8p-4) \, \cos^4(\sigma)
		+ (5p-7) \, \cos^2(\sigma) \, \sin^2(\sigma)} \, d\sigma \nonumber
	\\&\hspace{10mm} = \int_0^{\pi/2} \frac{(-p+5) \, \cos^2(\sigma) + p-1}{(3p+3) \, \cos^4(\sigma) + (4p-2) \, \cos^2(\sigma) + p-1} \, d\sigma \nonumber
	\\&\hspace{10mm} = \int_0^{\pi/2} \frac{(-p+5) \, \cos^2(\sigma) + p-1}{(3 \, \cos^2(\sigma) + 1) \, ((p+1) \, \cos^2(\sigma) + p-1)} \, d\sigma \nonumber
	\\&\hspace{10mm} = \int_0^{\pi/2} \left( \frac{2}{3 \, \cos^2(\sigma) + 1} - \frac{p-1}{(p+1) \, \cos^2(\sigma) + p-1} \right) \, d\sigma \nonumber
	\\&\hspace{10mm} = \int_0^{\pi/2} \left( \frac{2}{\tan^2(\sigma) + 4} - \frac{p-1}{(p-1) \, \tan^2(\sigma) + 2p} \right) \, \sec^2(\sigma) \, d\sigma \nonumber
	\\&\hspace{10mm} = \int_0^{\infty} \left( \frac{2}{t^2 + 4} - \frac{p-1}{(p-1) \, t^2 + 2p} \right) \, dt \nonumber
	\\&\hspace{10mm} = \left[ \arctan \left(\frac{t}{2}\right) - \sqrt{\frac{p-1}{2p}} \, \arctan\left(\sqrt{\frac{p-1}{2p}} \, t \right) \right]_0^{\infty} \nonumber
	\\&\hspace{10mm} = \frac{\pi}{2} \left( 1 - \sqrt{\frac{p-1}{2p}} \right) ,
\end{align*}
where we let $t = \tan(\sigma)$.  Thus, we need to choose
\begin{equation} \label{theta0}
	\theta_0 = 2k\pi \left( 1 - \sqrt{\frac{p-1}{2p}} \right)
\end{equation}
for some integer $k \geq 1$.  Since
\[\frac{k\pi}{2} < 2k\pi \left( 1 - \sqrt{\frac{p-1}{2p}} \right) = \theta_0 < 2\pi ,\]
we deduce that $k \leq 3$. 

Notice that, for each $k = 1,2,3$, $\theta_0$ given by \eqref{theta0} is decreasing as a function of $p$.
In particular, when $k = 1$, $\theta_0 = 2\pi$ for $p = 1$, $\pi < \theta_0 < 2\pi$ for $1 < p < 2$, $\theta_0 = \pi$ for $p = 2$, and $0 < \theta_0 < \pi$ for $2 < p < \infty$, see Figure \ref{fig-p}.

\begin{figure}
\begin{center}
\includegraphics{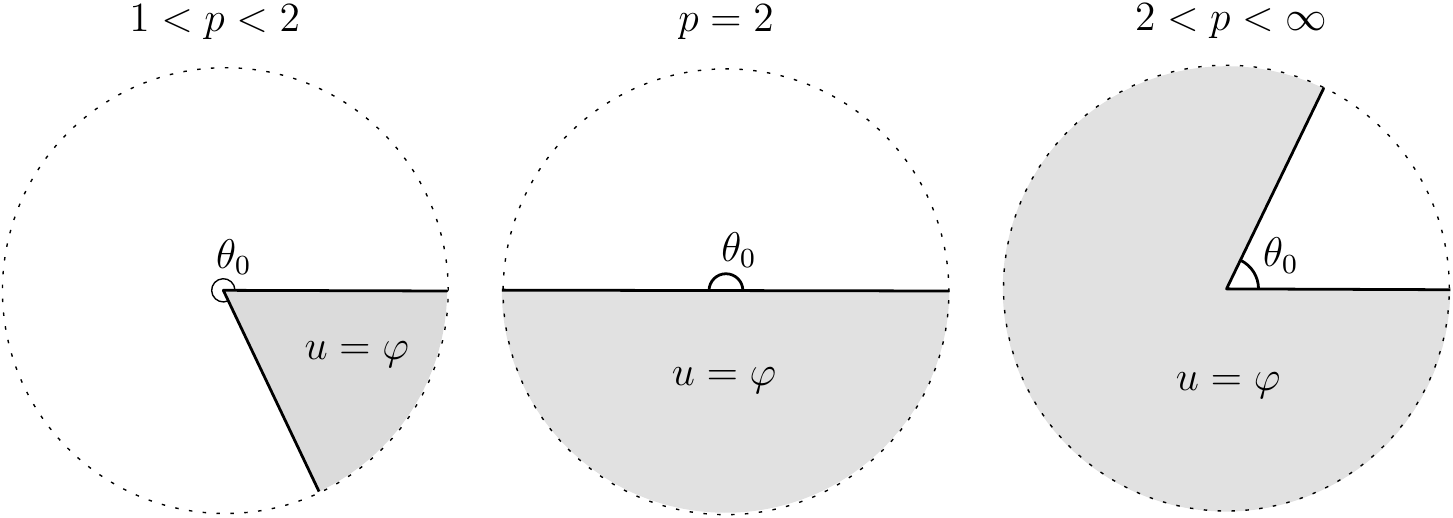}
\end{center}
\caption{The angle $\theta_0$ and the contact set $\{u=\varphi\}$ of the homogeneous solution for $1<p<2$, $p=2$, and $2<p<\infty$, respectively.}
\label{fig-p}
\end{figure}

Hence if $p \in (1,2) \cup (2,\infty]$, by setting $k = 1$ we can construct $u$ for which
\begin{equation*}
	\theta_0 = 2\pi \left( 1 - \sqrt{\frac{p-1}{2p}} \right) \in (0,\pi) \cup (\pi,2\pi) .
\end{equation*}

When $k = 2$, for all $1 < p < 2$ we have $\theta_0 > 2\pi$ and consequently we do not obtain a solution $u$, for $p = 2$ we have $\theta_0 = 2\pi$, and for all $2 < p < \infty$ we obtain a solution $u$ with $\pi < \theta_0 < 2\pi$.  Similarly, when $k = 3$, we do not obtain a solution $u$ for $1 < p < 9$, $\theta_0 = 2\pi$ for $p = 9$, and we obtain a solution $u$ with $\pi < \theta_0 < 2\pi$ for $9 < p < \infty$.

To conclude, we need to verify $\rho(\theta_0) = 1$. For this, suppose that $\theta$ is such that $\psi(\theta) = -(2j-1) \pi$ for an integer $j \geq 1$.  Observe that by \eqref{pLapHomog_psi} and symmetry,
\begin{equation*}
	\int_0^{\theta} \cos(\psi(\tau)) \, \sin(\psi(\tau)) \, F_p(\psi(\tau)) \, d\tau
	= \int_{-(2j-1) \pi}^{\pi} \frac{\cos(\sigma) \, \sin(\sigma) \, F_p(\sigma)}{1 + \cos^2(\sigma) \, F_p(\sigma)} \, d\sigma = 0 \,,
\end{equation*}
where $\sigma = \psi(\tau)$.  Therefore by \eqref{pLapHomog_rho} $\rho(\theta) = \rho(0) = 1$.  In particular, when $j = k$, we get $\rho(\theta_0) = \rho(0)=1$. 

Notice that for $k = 1$ the contact set $\{u>\varphi\}$ is precisely $\{\theta_0\leq\theta\leq 2\pi\}$, whereas for $k = 2,3$ the contact set $\{u>\varphi\}$ is the union of $\{\theta_0\leq\theta\leq 2\pi\}$ and the rays $\psi(\theta) = -(2j-1)\pi/2$, i.e. $\theta = 2\pi j \left( 1 - \sqrt{\frac{p-1}{2p}} \right)$, for $j = 1,\ldots,k-1$.
\end{proof}

\begin{rem}
Observe that when $k = 1$ and $p = 2$, the above argument produces a solution to $u$ to \eqref{pLapHomog_prob1} with $\theta_0 = \pi$.  In other words, the contact set $\{u = \varphi\}$ is a half-space.
On the other hand, 
when $k = p = 2$, or when $k = 3$ and $p = 9$, the above argument produces solutions $\rho$ and $\psi$ to \eqref{pLapHomog_ode3} with $\theta_0 = 2\pi$ so that
\begin{equation*}
	\rho(0) = \rho(2\pi), \quad \psi(2\pi) - \psi(0) = -2k\pi,
\end{equation*}
and we thereby obtain $u \in C^1(\R^2)$ such that $\Delta_p u = 0$ in all of $\R^2$.  Note that $\rho(0) > 0$ and $\psi(0)$ are arbitrary and this corresponds to the invariance of $\Delta_p u = 0$ in $\R^2$ under scaling and rotations.

While this solution for $p=9$ is new (at least to our knowledge), these solutions for $p = 2$ are well-known.  Indeed, when $p = 2$, \eqref{pLapHomog_ode} reduces to
\begin{equation*}
	v'' = -4 v,
\end{equation*}
which obviously has the solution
\begin{equation} \label{pLapHomog_soln_pis2}
	v(\theta) = A \cos(2\theta) + B \sin(2\theta)
\end{equation}
for constants $A,B \in \R$.  Assuming that $v$ is given by \eqref{pLapHomog_soln_pis2} for all $\theta \in [0,\theta_0]$ and $v$ satisfies the boundary conditions $v(0) = v(\theta_0) = -1$ and $v'(0) = v'(\theta_0) = 0$, we obtain $\theta_0 = \pi$, $A = -1$, and $B = 0$ so that
\begin{equation*}
	u(x) = r^2 v(re^{i\theta}) = -|x|^2 + 2 (x_2)_+^2
\end{equation*}
so that $w = u-\varphi$ is the well-known global solution $w = 2 (x_2)_+^2$ to the obstacle problem $\min\{\Delta w,w\} = 0$ in $\R^2$.  If instead we assume that $v$ is given by \eqref{pLapHomog_soln_pis2} for all $\theta \in [0,2\pi]$, then
\begin{equation*}
	u(x) = r^2 v(re^{i\theta}) = A (x_1^2 - x_2^2) + 2 B x_1 x_2,
\end{equation*}
giving us the usual homogeneous degree two harmonic polynomials.
\end{rem}

\section{Structure of the free boundary}
\label{sec:structfreebdry_sec}

In this section we prove Theorem \ref{nondegeneracy} and Theorem \ref{rectifiability}. 
First we will use the implicit function theorem to show that \eqref{concavity} implies that either $\varphi$ is a constant function or $\{\nabla \varphi = 0\}$ is countably $(n-1)$-rectifiable.
One immediate consequence is that $\{\nabla \varphi \neq 0\}$ is either empty or an open dense subset, which we use to prove Theorem \ref{nondegeneracy}.
Another immediate consequence is Theorem \ref{rectifiability}. 

\begin{lem} \label{rectifiability_lemma}
Let $p\in(1,\infty)$ and $\varphi \in C^2(B_1)$ such that \eqref{concavity} holds true.  Then either $\varphi$ is identically constant on $B_1$ or $\{\nabla \varphi = 0\}$ is countably $(n-1)$-rectifiable.
\end{lem}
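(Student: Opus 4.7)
The plan is to show that under \eqref{concavity} the Hessian $D^2\varphi$ cannot vanish at any critical point of $\varphi$, and then to apply the implicit function theorem to one of the partial derivatives $\partial_j\varphi$. Since $\varphi\in C^2$, on $V:=\{\nabla\varphi\neq 0\}$ the operator in \eqref{concavity} expands as
\[
	|\nabla\varphi|^{2-p}\,{\rm div}\bigl(|\nabla\varphi|^{p-2}\nabla\varphi\bigr) = \Delta\varphi + (p-2)\,\frac{\langle D^2\varphi\,\nabla\varphi,\nabla\varphi\rangle}{|\nabla\varphi|^2}.
\]

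Assume $\varphi$ is not constant on $B_1$; since $B_1$ is connected, $V$ is nonempty. I first claim that $V$ is dense in $B_1$. If not, $U:=\textrm{int}\,\{\nabla\varphi=0\}$ is a nonempty open set, and by connectedness there is $x_0\in B_1\cap\overline U\cap\overline V$. On $U$ the function $\varphi$ is locally constant, so $D^2\varphi\equiv 0$ on $U$; by continuity $D^2\varphi(x_0)=0$ as well. On the other hand, choosing $x_k\in V$ with $x_k\to x_0$ and, after passing to a subsequence, $v_k:=\nabla\varphi(x_k)/|\nabla\varphi(x_k)|\to v$ with $|v|=1$, the expansion above together with \eqref{concavity} and the continuity of $D^2\varphi$ yield in the limit
\[
	\Delta\varphi(x_0) + (p-2)\,\langle D^2\varphi(x_0)\,v,v\rangle \leq -c_0,
\]
which combined with $D^2\varphi(x_0)=0$ gives $0\leq -c_0$, a contradiction. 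Hence $V$ is dense in $B_1$.

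The same subsequence argument now applies at any $x_0\in\{\nabla\varphi=0\}$: density of $V$ provides a sequence $x_k\in V$ with $x_k\to x_0$, and after extraction we obtain a unit vector $v$ so that the displayed inequality above holds. Since the left-hand side would vanish if $D^2\varphi(x_0)=0$, we conclude $D^2\varphi(x_0)\neq 0$, so there exists $j=j(x_0)\in\{1,\dots,n\}$ with $\nabla(\partial_j\varphi)(x_0)\neq 0$. Because $\partial_j\varphi\in C^1(B_1)$, the implicit function theorem shows that the set
\[
	A_j:=\bigl\{x\in B_1:\partial_j\varphi(x)=0,\ \nabla(\partial_j\varphi)(x)\neq 0\bigr\}
\]
is locally a $C^1$ hypersurface in $B_1$, and in particular countably $(n-1)$-rectifiable. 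Since every critical point of $\varphi$ belongs to some $A_j$,
\[
	\{\nabla\varphi=0\}\subseteq\bigcup_{j=1}^{n}A_j,
\]
so $\{\nabla\varphi=0\}$ is countably $(n-1)$-rectifiable.

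The one delicate point is the limiting step: the unit vectors $v_k$ need not converge, so one must pass to a subsequence, and it is crucial that the constant $c_0$ in \eqref{concavity} is uniform and that $D^2\varphi$ is continuous, so the inequality survives in the limit. Once this is in place, both the density of $V$ and the non-vanishing of $D^2\varphi$ on $\{\nabla\varphi=0\}$ follow from the same mechanism, after which the rectifiability conclusion is a routine application of the implicit function theorem.
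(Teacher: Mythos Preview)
Your proof is correct and follows the same overall skeleton as the paper's: show that $D^2\varphi$ cannot vanish at critical points, then invoke the implicit function theorem. The execution differs in two places. First, to rule out $D^2\varphi(x_0)=0$ on $\{\nabla\varphi=0\}$, the paper observes directly that on $\{\nabla\varphi\neq0\}$ one has the quantitative bound
\[
c_0 \leq \left| \Delta \varphi + (p-2)\,\frac{\langle D^2\varphi\,\nabla\varphi,\nabla\varphi\rangle}{|\nabla\varphi|^2} \right| \leq (n+p-2)\,|D^2\varphi|,
\]
and then uses connectedness to split $B_1$ into the two relatively clopen sets $\{|D^2\varphi|\geq c_0/(n+p-2)\}$ and $\operatorname{int}\{\nabla\varphi=0\}$; this yields the uniform lower bound $|D^2\varphi|\geq c_0/(n+p-2)$ everywhere in the nonconstant case. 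You instead run a sequential compactness argument on the unit vectors $\nabla\varphi/|\nabla\varphi|$, first to prove density of $\{\nabla\varphi\neq0\}$ and then to force $D^2\varphi(x_0)\neq0$ at each critical point. Your route is slightly softer and gives only the qualitative non-vanishing, which is all that is needed for the lemma, while the paper's uniform bound is a bit stronger and is reused later (e.g.\ in the proof of Lemma~\ref{lem1}). Second, at the implicit-function-theorem step the paper diagonalizes $D^2\varphi(x_0)$ and applies the theorem to the first $k=\operatorname{rank}D^2\varphi(x_0)$ partials, exhibiting $\{\nabla\varphi=0\}$ locally inside an $(n-k)$-dimensional $C^1$ submanifold, whereas you apply it to a single $\partial_j\varphi$ and cover $\{\nabla\varphi=0\}$ by the $n$ hypersurfaces $A_j$. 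Both conclusions suffice for countable $(n-1)$-rectifiability.
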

\begin{proof}
First we will show that \eqref{concavity} implies that either $\varphi$ is identically constant on $B_1$ or
\begin{equation} \label{HessianLB}
	|D^2 \varphi| \geq \frac{c_0}{n+p-2} \quad \textrm{in} \quad B_1,
\end{equation}
where $|D^2\varphi(x)|$ denotes the operator norm of the matrix $D^2\varphi(x)$.

By \eqref{concavity},
\begin{equation*}
	c_0 \leq \left| \Delta \varphi + (p-2) \frac{\langle \nabla \varphi, D^2 \varphi \nabla \varphi \rangle}{|\nabla \varphi|^2} \right|
		\leq (n+p-2) \, |D^2 \varphi| \quad \textrm{in} \quad \{\nabla \varphi\neq0\}.
\end{equation*}
Hence, noting that $\varphi \in C^2(B_1)$, we can express $B_1$ as the union of the disjoint sets
\begin{equation*}
	\Big\{ |D^2 \varphi| \geq \frac{c_0}{n+p-2} \Big\} \quad \textrm{and} \quad \operatorname{int}\{ \nabla \varphi = 0 \} ,
\end{equation*}
which are both relatively open and closed in $B_1$, and use the connectedness of $B_1$ to reach our desired conclusion.

Now, suppose \eqref{HessianLB} holds true.  Let $x_0 \in B_1 \cap \{\nabla \varphi = 0\}$.  By \eqref{HessianLB}, $D^2 \varphi(x_0)$ has rank $k \geq 1$.  Hence after an orthogonal change of variables, we may assume that
\begin{equation*}
	D^2 \varphi(x_0) = \left( \begin{matrix} A & 0 \\ 0 & 0 \end{matrix} \right)
\end{equation*}
for some diagonal $k \times k$ matrix $A$ with full rank.  By the implicit function theorem, there is an open neighborhood of $x_0$ in which $M = \{ D_i \varphi = 0 \text{ for } i = 1,2,\ldots,k \}$ is a $C^1$ $(n-k)$-dimensional submanifold and $\{ \nabla \varphi = 0 \} \subseteq M$.  Therefore $\{ \nabla \varphi = 0 \}$ is countably $(n-1)$-rectifiable.
\end{proof}

Next we will prove Theorem \ref{nondegeneracy}.
For this, we will need the following Lemma.

\begin{lem}\label{lem1}
Let $\varphi\in C^2(B_1)$ be a function satisfying \eqref{concavity}.
Let $x_0\in B_{1/2}$ be such that $\nabla\varphi(x_0)=0$.
Then, there exists $\varepsilon>0$ and $\delta>0$ such that
\[\Delta_p\left(\varphi(x)+\varepsilon\frac{|x|^2}{2}\right)\leq 0\qquad \textrm{in}\quad B_\delta(x_0).\]
The constants $\varepsilon$ and $\delta$ depend only on the modulus of continuity of $D^2\varphi$ and on the constant $c_0$ in \eqref{concavity}.
\end{lem}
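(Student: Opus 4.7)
The plan is to reduce the inequality to a pointwise statement involving only $D^2\varphi$ and then to use the concavity condition~\eqref{concavity} in a limiting form at $x_0$. Applying the identity $\Delta_p u = |\nabla u|^{p-2}\bigl[\Delta u + (p-2)\langle D^2u\,\hat\nu,\hat\nu\rangle\bigr]$ with $\hat\nu = \nabla u/|\nabla u|$ to $\psi(x) := \varphi(x) + \tfrac{\varepsilon}{2}|x|^2$, and using $D^2\psi = D^2\varphi + \varepsilon I$ and $\Delta\psi = \Delta\varphi + n\varepsilon$, yields at every point with $\nabla\psi\neq 0$
\[
\Delta_p\psi \;=\; |\nabla\psi|^{p-2}\Bigl[\Delta\varphi(x) + (p-2)\langle D^2\varphi(x)\,\hat\nu_\psi,\hat\nu_\psi\rangle + (n+p-2)\varepsilon\Bigr].
\]
Introducing the majorant
\[
\mathcal{M}(x) \;:=\; \max_{\nu\in S^{n-1}}\Bigl[\Delta\varphi(x) + (p-2)\langle D^2\varphi(x)\nu,\nu\rangle\Bigr],
\]
one gets $\Delta_p\psi \le |\nabla\psi|^{p-2}\bigl(\mathcal{M}(x)+(n+p-2)\varepsilon\bigr)$. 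Since $\mathcal{M}$ is continuous in $x$, the problem reduces to proving $\mathcal{M}(x_0) \le -c_1$ for some $c_1 = c_1(n,p,c_0) > 0$: the modulus of continuity of $D^2\varphi$ then supplies $\delta > 0$ with $\mathcal{M} \le -c_1/2$ on $B_\delta(x_0)$, and any $\varepsilon \le c_1/[4(n+p-2)]$ forces $\Delta_p\psi \le 0$ wherever $\nabla\psi\neq 0$, with a routine weak extension across $\{\nabla\psi=0\}$.

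To bound $\mathcal{M}(x_0)$, set $A := D^2\varphi(x_0)$. By Lemma~\ref{rectifiability_lemma}, since $\varphi$ is not identically constant (otherwise the lemma is vacuous), we have $|A| \ge c_0/(n+p-2) > 0$. Next, approach $x_0$ along rays $x_0 + t_k v$ with $Av \neq 0$: the expansion $\nabla\varphi(x_0 + tv) = tAv + o(t)$ shows that $\nabla\varphi/|\nabla\varphi| \to Av/|Av|$, so \eqref{concavity} passes to the limit and gives
\[
\operatorname{tr}(A) + (p-2)\langle A\nu,\nu\rangle \;\le\; -c_0 \qquad\text{for every } \nu \in S^{n-1} \cap \operatorname{Range}(A),
\]
because $Av/|Av|$ exhausts $S^{n-1}\cap\operatorname{Range}(A)$ as $v$ varies. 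The maximum defining $\mathcal{M}(x_0)$ is attained at the $\lambda_{\max}(A)$-eigenvector when $p \ge 2$ and at the $\lambda_{\min}(A)$-eigenvector when $1 < p < 2$. When that extremal eigenvalue is nonzero, its eigenvector lies in $\operatorname{Range}(A)$ and the displayed inequality forces $\mathcal{M}(x_0) \le -c_0$. When it equals zero the remaining eigenvalues share a common sign; applying the displayed inequality to the most extreme \emph{nonzero} eigenvalue $\lambda^*$, combining it with the elementary bound $\operatorname{tr}(A) \le \lambda^*$ (or $\ge \lambda^*$) that follows from this common sign, and optimizing in $|\lambda^*|$, yields $\mathcal{M}(x_0) = \operatorname{tr}(A) \le -c_0/(p-1)$ when $p > 2$, while for $1<p<2$ the two bounds combined with $|A|>0$ are inconsistent and rule out this subcase altogether.

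The main obstacle is precisely this last scenario, where $A$ has a nontrivial kernel. The concavity~\eqref{concavity}, viewed at $x_0$, only records information on $\langle A\nu,\nu\rangle$ for $\nu$ of the form $Av/|Av|$, i.e.\ for $\nu\in\operatorname{Range}(A)$, so the direction realizing $\mathcal{M}(x_0)$ may sit in $\ker(A)$ and be completely invisible to~\eqref{concavity} taken alone. Closing this blind spot is where the quantitative Hessian bound of Lemma~\ref{rectifiability_lemma} and the sign analysis of the nonzero eigenvalues of $A$ sketched above become essential.
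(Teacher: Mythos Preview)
Your argument is correct, and it is in fact a cleaner route than the one in the paper. Both proofs begin with the same limiting observation: approaching $x_0$ along rays with $Av\neq 0$ (where $A=D^2\varphi(x_0)$) and using \eqref{concavity} yields
\[
\operatorname{tr}(A)+(p-2)\langle A\nu,\nu\rangle\le -c_0\qquad\text{for every }\nu\in S^{n-1}\cap\operatorname{Range}(A),
\]
which in particular gives $\operatorname{tr}(A)+(p-2)\lambda_i\le -c_0$ for every nonzero eigenvalue $\lambda_i$. From here the two arguments diverge. The paper splits into the cases $\lambda_{\max}\le 0$ and $\lambda_{\max}>0$; in the second case it computes $\widetilde\Delta_p\psi$ explicitly and must make a delicate pigeonhole choice of $\varepsilon$ (so that $|\lambda_i+\varepsilon|\ge c\varepsilon$ for all $i$) to keep the denominator $|\nabla\psi|^2$ away from zero near $x_0$. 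You instead pass to the Pucci-type majorant $\mathcal M(x)=\Delta\varphi(x)+(p-2)\lambda_{\mathrm{ext}}(D^2\varphi(x))$, which dominates the bracket in $\widetilde\Delta_p\psi$ uniformly in the direction $\hat\nu_\psi$; this reduces the whole lemma to the single pointwise bound $\mathcal M(x_0)\le -c_1$ and makes the choice of $\varepsilon$ and $\delta$ immediate, with no need to control $\nabla\psi$. Your treatment of the degenerate subcase (extremal eigenvalue equal to zero) via the pair of inequalities $\operatorname{tr}(A)\le\lambda^*$ and $\operatorname{tr}(A)+(p-2)\lambda^*\le -c_0$ is correct and gives $\mathcal M(x_0)\le -c_0/(p-1)$ for $p>2$, while ruling the subcase out for $p<2$. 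Two minor remarks: the phrase ``optimizing in $|\lambda^*|$'' is really ``taking the worst case over $\lambda^*<0$'', and the ``routine weak extension across $\{\nabla\psi=0\}$'' deserves one line of justification (it is standard for $C^2$ functions, and the paper's proof has the same implicit step).
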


\begin{proof}
We may assume $x_0=0$.
Let us denote
\[\begin{split}
\widetilde \Delta_pw:=|\nabla w|^{2-p}\Delta_p w &=|\nabla w|^{2-p}\textrm{div}\bigl(|\nabla w|^{p-2}\nabla w\bigr)\\
&=\Delta w+(p-2)\frac{\langle\nabla w,D^2w\nabla w\rangle}{|\nabla w|^2}\\
&=\Delta w+(p-2)\Delta_\infty w
\end{split}\]
wherever $\nabla w \neq 0$.  We know that by \eqref{concavity}
\begin{equation}\label{lem1_concavity}
	\widetilde \Delta_p\varphi\leq -c_0<0 \qquad\textrm{in}\quad B_1\cap\{\nabla \varphi\neq0\},
\end{equation}
and we want to show that $\widetilde \Delta_p(\varphi+\frac12\varepsilon|x|^2)\leq 0$ in $B_\delta\cap\{\nabla \varphi + \varepsilon x \neq0\}$.

Let
\[\lambda_{\min}(x) = \lambda_1(x) \leq \lambda_2(x) \leq \cdots \leq \lambda_n(x) = \lambda_{\max}(x)\]
denote the eigenvalues of $D^2\varphi(x)$ and $\lambda_i=\lambda_i(0)$, $\lambda_{\min}=\lambda_{\min}(0)$, and $\lambda_{\max}=\lambda_{\max}(0)$.
By continuity of $D^2\varphi$, we have that $\lambda_i(x)$ are continuous in $x$.

\vspace{3mm}

\noindent \emph{Case 1.} Assume first that $\lambda_{\max}\leq 0$, i.e., $\lambda_i\leq0$ for all $i=1,...,n$.

Noting that 
\[\lambda_{\min}(x)\leq \Delta_\infty\varphi(x)\leq \lambda_{\max}(x)\]
and using \eqref{lem1_concavity}, we obtain for every $x \in B_{\delta} \cap \{\nabla \varphi\neq0\}$ that 
\[(n+p-2)\lambda_{\min}(x)\leq \bigl(\lambda_1(x)+...+\lambda_n(x)\bigr)+(p-2)\lambda_{\min}(x)\leq -\frac12c_0\qquad\textrm{if}\quad p>2\]
and
\[(n+p-2)\lambda_{\min}(x)\leq \bigl(\lambda_1(x)+...+\lambda_n(x)\bigr)+(p-2)\lambda_{\max}(x)\leq -\frac12c_0\qquad\textrm{if}\quad p<2,\]
provided that $\delta>0$ is small enough.
In any case, we find $\lambda_{\min}(x)\leq \frac{-1}{2(n+p-2)} c_0$ in $B_\delta$.
Moreover, if $\delta$ is small, then $\lambda_{\max}(x)\leq \varepsilon$ in $B_\delta$.
Hence, for all $x \in B_{\delta}$ such that $\nabla \varphi(x) \neq 0$ and $\nabla \varphi(x) + \epsilon x \neq 0$ we have
\begin{align}\label{lem1_case1_concl1}
	\widetilde \Delta_p\left(\varphi(x)+\frac12\varepsilon|x|^2\right) 
	&\leq \lambda_{\min}(x)+(n-1)\lambda_{\max}(x)+(p-2)\lambda_{\max}(x)+(n+p-2)\varepsilon\nonumber
	\\&\leq -\frac{1}{2(n+p-2)} c_0+(2n+2p-5)\varepsilon \leq 0\qquad\textrm{if}\quad p>2, 
\end{align}
provided $\varepsilon$ is sufficiently small.
Since $B_{\delta} \cap \{\nabla \varphi \neq 0\}$ is an open dense subset of $B_{\delta}$ (thanks to Lemma \ref{rectifiability_lemma}), we have \eqref{lem1_case1_concl1} for all $x \in B_{\delta}$ such that $\nabla \varphi(x) + \epsilon x \neq 0$.  Similarly, for all $x \in B_{\delta}$ such that $\nabla \varphi(x) + \epsilon x \neq 0$ we obtain 
\begin{align*}
	\widetilde \Delta_p\left(\varphi(x)+\frac12\varepsilon|x|^2\right) 
	&\leq \lambda_{\min}(x)+(n-1)\lambda_{\max}(x)+(p-2)\lambda_{\min}(x)+(n+p-2)\varepsilon
	\\&\leq -\frac{p-1}{2(n+p-2)} c_0+(2n+p-3)\varepsilon \leq 0\qquad\textrm{if}\quad p<2
\end{align*}
provided $\varepsilon$ is sufficiently small, as desired.

\vspace{3mm}

\noindent \emph{Case 2.} Let us assume now that $\lambda_{\max}>0$.

Since $\varphi\in C^2(B_1)$, there is a modulus of continuity $\omega$ such that
\begin{gather}
	\left|D^2\varphi(x)-D^2\varphi(0)\right|\leq \omega(|x|),\nonumber\\
	\left|\nabla\varphi(x)-D^2\varphi(0)x\right|\leq |x|\,\omega(|x|).\label{lem1_quadratic1}
\end{gather}
After an affine change of variables, we may assume that $D^2\varphi(0)$ is a diagonal matrix,
\begin{equation}\label{lem1_quadratic2}
	D^2\varphi(0)=\left(\begin{array}{ccc}
	\lambda_1 & \cdots & 0 \\
	\vdots & \ddots & \vdots \\
	0 & \cdots & \lambda_n
	\end{array}\right).
\end{equation}
Notice that by \eqref{lem1_quadratic1} and \eqref{lem1_quadratic2} we have
\begin{equation}\label{lem1_quadratic3}
	\bigl|\nabla \varphi(x)-(\lambda_1x_1,...,\lambda_nx_n)\bigr|\leq |x|\,\omega(|x|).
\end{equation}
By \eqref{lem1_quadratic1}, \eqref{lem1_quadratic2}, and \eqref{lem1_quadratic3}, for any $x\in B_1$ such that $\nabla\varphi(x)\neq0$ we have
\[\widetilde\Delta_p\varphi(x)=(\lambda_1+...+\lambda_n)+o(1)+(p-2)\frac{\sum_i \lambda_i^3 x_i^2+o(|x|^2)}{\sum_i\lambda_i^2x_i^2+o(|x|^2)}.\]
In particular, since $\{\nabla \varphi \neq 0\}$ is dense in $B_1$ (as a consequence of Lemma \ref{rectifiability_lemma}), for each $\lambda_i\neq0$ there is a sequence of points $x^{(k)}\to0$ such that
\[\widetilde \Delta_p\varphi(x^{(k)})\to (\lambda_1+...+\lambda_n)+(p-2)\lambda_i.\]
This together with \eqref{lem1_concavity} means that
\begin{align}\label{lem1_lambda}
	(\lambda_1+\cdots+\lambda_n)+(p-2)\lambda_{\max} &\leq -c_0\qquad \textrm{if}\quad p>2,\\
	(\lambda_1+\cdots+\lambda_n)+(p-2)\lambda_{\min} &\leq -c_0\qquad \textrm{if}\quad p<2.\nonumber
\end{align}

Let $\varepsilon>0$ to be chosen later, and let $\delta>0$ small so that $\omega(|x|)<\varepsilon^5$ for $|x|<\delta$.
Then for all $x \in B_{\delta}$ such that $\nabla \varphi(x) + \varepsilon x \neq 0$ we have 
\begin{align}\label{lem1_computation1}
	&\widetilde\Delta_p\left(\varphi(x)+\frac12\varepsilon|x|^2\right) \nonumber
	\\&\hspace{10mm} = \Delta\varphi(x)+n\varepsilon+(p-2)\frac{\langle\nabla \varphi(x)+\varepsilon x, 
		(D^2\varphi(x)+\varepsilon{\rm Id})(\nabla \varphi(x)+\varepsilon x)\rangle}{|\nabla\varphi+\varepsilon x|^2}. 
\end{align}
We must now be careful and choose $\varepsilon>0$ such that the denominator is not zero for any $|x|<\delta$.
For this, let $\theta>0$ to be chosen later, and $k\in\{1,...,n+1\}$ be such that no $\lambda_i$ satisfies $-(k+1)\theta<\lambda_i<-k\theta$.
Take $\varepsilon=(k+\frac12)\theta$, and notice that $\frac12\theta\leq \varepsilon\leq (n+2)\theta$ and
\begin{equation}\label{frt}
|\lambda_i+\varepsilon|\geq \frac{\varepsilon}{2(n+2)}\qquad\textrm{for all}\quad i=1,...,n.
\end{equation}
Suppose $p>2$.  By \eqref{lem1_quadratic1}, \eqref{lem1_quadratic2}, and \eqref{lem1_quadratic3}, we find that for $|x|<\delta$ 
\begin{align}\label{lem1_computation2}
\frac{\langle\nabla \varphi(x)+\varepsilon x,(D^2\varphi(x)+\varepsilon{\rm Id})(\nabla \varphi(x)+\varepsilon x)\rangle}{|\nabla\varphi+\varepsilon x|^2}&\leq \frac{\sum_i(\lambda_i+\varepsilon)^3x_i^2+\varepsilon^4|x|^2}{ \sum_i(\lambda_i+\varepsilon)^2x_i^2-\varepsilon^4|x|^2}\nonumber\\
&= \frac{\sum_i\left[(\lambda_i+\varepsilon)^3-\varepsilon^4\right]x_i^2}{ \sum_i\left[(\lambda_i+\varepsilon)^2-\varepsilon^4\right]x_i^2}\nonumber\\
&\leq {\max}_i\frac{(\lambda_i+\varepsilon)^3+\varepsilon^4}{(\lambda_i+\varepsilon)^2-\varepsilon^4}\nonumber\\
&= {\max}_i\left(\lambda_i+\varepsilon+\frac{(\lambda_i+\varepsilon)\varepsilon^4+\varepsilon^4}{ (\lambda_i+\varepsilon)^2-\varepsilon^4}\right)\nonumber\\
&\leq \lambda_{\max}+\varepsilon+C\varepsilon^2
\end{align}
provided that $\varepsilon>0$ is small enough, where in the last inequality we used \eqref{frt}.
Now using \eqref{lem1_quadratic1}, \eqref{lem1_quadratic2}, \eqref{lem1_computation2}, and \eqref{lem1_lambda}, it follows by\eqref{lem1_computation1} that
\[\widetilde \Delta_p\left(\varphi(x)+\frac12\varepsilon|x|^2\right)\leq (\lambda_1+...+\lambda_n) +(p-2)\lambda_{\max}+n\varepsilon+C\varepsilon^2\leq 0\]
on $B_{\delta} \setminus \{0\}$, provided that $\varepsilon>0$ is small enough.  (Recall from the argument above that $\nabla \varphi(x) + \varepsilon x = 0$ if and only if $x = 0$.)

On the other hand, if $p<2$, then the same argument yields
\[\widetilde \Delta_p\left(\varphi(x)+\frac12\varepsilon|x|^2\right)\leq (\lambda_1+...+\lambda_n) +(p-2)\lambda_{\min}+n\varepsilon+C\varepsilon^2\leq 0\]
on $B_{\delta} \setminus \{0\}$, and thus we are done.
\end{proof}

Using the previous Lemma, we can now establish the following nondegeneracy property.

\begin{proof}[Proof of Theorem \ref{nondegeneracy}]
We claim that for every free boundary point $y_0 \in B_{1/2} \cap \partial\{u>\varphi\}$ there exists $\delta=\delta(y_0) > 0$ and $c=c(y_0) > 0$ such that
\begin{equation} \label{nondegeneracy_eqn}
	\sup_{B_r(y)} (u-\varphi) \geq c r^2 \qquad \textrm{for}\quad r \in (0,\delta), \, y \in B_{\delta}(y_0) \cap \overline{\{u>\varphi\}}.
\end{equation}
The conclusion of Theorem \ref{nondegeneracy} then follows from a standard covering argument.

In the case where $\nabla \varphi(y_0) \neq 0$, we may choose $\delta$ so that $\nabla \varphi \neq 0$ in $B_{4\delta}(y_0)$. In this way $\Delta_p u$ is uniformly elliptic in $B_{4\delta}(y_0)$ and \eqref{nondegeneracy_eqn} follows by the classical theory (see for instance~\cite[Lemma 5]{Caf98}).

Suppose $\nabla \varphi(y_0) = 0$.  By Lemma \ref{lem1}, there are $\varepsilon>0$ and $\delta>0$ such that
\[v(x)=\varphi(x)+\varepsilon\frac{|x-y|^2}{2}\]
satisfies $\Delta_p v\leq 0$ in $B_{2\delta}(y_0)$.
By continuity, we may assume $y \in B_{\delta}(y_0) \cap \{u>\varphi\}$.
Then, for any $r<\delta$, we have $\Delta_p u \geq \Delta_p v$ in $\{u>\varphi\} \cap B_r(y)$.
Moreover, $u(y)\geq \varphi(y)=v(y)$.
It follows from the comparison principle that there is $z_y\in \partial(\{u>\varphi\}\cap B_r(y))$ such that $u(z_y)\geq v(z_y)$.
Since $u<v$ on $\{u=\varphi\}$ it follows that $z_y\in \{u>\varphi\}\cap \partial B_r(x_0)$, and so
\[u(z_y)-\varphi(z_y) = u(z_y)-v(z_y)+\frac{\varepsilon r^2}{2} \geq \frac{\varepsilon r^2}{2}. \qedhere \]
\end{proof}

As a direct consequence of Lemma \ref{rectifiability_lemma} and the classical theory of the obstacle problem for uniformly elliptic operators, we obtain Theorem \ref{rectifiability}.

\begin{proof}[Proof of Theorem \ref{rectifiability}]
Let us express the free boundary $\Gamma = \partial \{u > \varphi\}$ as
\begin{equation} \label{rectif_decomp}
	\Gamma = \Gamma_1 \cup \Gamma_2 \quad\textrm{where}\quad
	\Gamma_1 = \Gamma \cap \{\nabla \varphi \neq 0\} \quad\textrm{and}\quad
	\Gamma_2 = \Gamma \cap \{\nabla \varphi = 0\}.
\end{equation}
In order to show that the free boundary $\Gamma$ is countably $(n-1)$-rectifiable, it suffices to show that each of the sets $\Gamma_1$ and $\Gamma_2$ are countably $(n-1)$-rectifiable.  For every $x_0 \in \Gamma_1$ there exists a $\delta > 0$ such that $\nabla \varphi \neq 0$ in $B_{\delta}(x_0)$ and thus $\Delta_p u$ is uniformly elliptic in $B_{\delta}(x_0)$.  Hence $\Gamma_1 \cap B_{\delta/2}(x_0) = \Gamma \cap B_{\delta/2}(x_0)$ is a countably $(n-1)$-rectifiable set with finite $(n-1)$-dimensional measure (see for instance~\cite[Corollary 4]{Caf98}).  It follows from a covering argument that $\Gamma_1$ is countably $(n-1)$-rectifiable.  By Lemma \ref{rectifiability_lemma}, $\{\nabla \varphi = 0\}$ is countably $(n-1)$-rectifiable and thus $\Gamma_2$ is countably $(n-1)$-rectifiable.
\end{proof}

\begin{rem}
Let $\Gamma = \Gamma_1 \cup \Gamma_2$ be as in \eqref{rectif_decomp}.
Our argument show that, for each $i = 1,2$ and $x_0 \in \Gamma_i$, there exists a $\delta > 0$ such that $\Gamma_i \cap B_{\delta}(x_0)$ is a relatively closed, countably $(n-1)$-rectifiable subset of $B_{\delta}(x_0)$, with $\mathcal{H}^{n-1}(\Gamma_i \cap B_{\delta}(x_0)) < \infty$.  However, since $\Gamma_1$ might be badly behaved near free boundary points $x_0$ at which $\nabla \varphi(x_0) = 0$, we cannot conclude that $\mathcal{H}^{n-1}(\Gamma \cap K) < \infty$ for all compact subsets $K \subset B_1$.
\end{rem}

\end{document}